\title[]{Invariant distributions on projective spaces over local fields}
\author[]{Guyan Robertson}
\address{School of Mathematics and Statistics, University of Newcastle, NE1 7RU, England, U.K.}
\email{a.g.robertson@ncl.ac.uk}
\date{July 1, 2010}
\subjclass{Primary 20F65, 20G25, 51E24.}
\keywords{Buildings, boundary distributions}
\chardef\bslash=`\\ 
\def\verbatim{\interlinepenalty\@M \@verbatim
  \leftskip\@totalleftmargin\advance\leftskip2pc
  \frenchspacing\@vobeyspaces \@xverbatim}
\newtheorem{theorem}{Theorem}[section]
\newtheorem{corollary}[theorem]{Corollary}
\newtheorem{lemma}[theorem]{Lemma}
\theoremstyle{definition}
\newtheorem{definition}[theorem]{Definition}
\newcommand{\cl}[1]{{\mathcal{#1}}}
\newcommand{\bb}[1]{{\mathbb{#1}}}
\newcommand{\fk}[1]{{\mathfrak{#1}}}
\newcounter{picture}
\newcommand{\e}{{\varepsilon}}
\newcommand{\PGL}{{\text{\rm{PGL}}}}
\newcommand{\GL}{{\text{\rm{GL}}}}
\newcommand{\vol}{{\text{\rm{vol}}}}
\newcommand{\id}{{\bf 1}}
\begin{document}

\begin{abstract}
Let $\Gamma$ be an $\widetilde A_n$ subgroup of $\PGL_{n+1}(\bb K)$, with $n\ge 2$, where $\bb K$ is a local field with residue field of order $q$ and let $\bb P^n_{\bb K}$ be projective $n$-space over $\bb K$.
The module of coinvariants $H_0(\Gamma; C(\bb P^n_{\bb K},\bb Z))$ is
shown to be finite.
Consequently there is no nonzero $\Gamma$-invariant $\bb Z$-valued distribution on $\bb P^n_{\bb K}$.
\end{abstract}

\maketitle

\section{Introduction}\label{intro}

Let $\bb K$ be a nonarchimedean local field with residue field $k$ of order $q$ and uniformizer $\pi$.
Denote by $\bb P^n_{\bb K}$ the set of one dimensional subspaces of the vector space $\bb K^{n+1}$, i.e. the set of points in projective $n$-space over $\bb K$. Then
$\bb P^n_{\bb K}$ is a compact totally disconnected space with the quotient topology inherited from $\bb K^{n+1}$, and there is a continuous action of $G=\PGL_{n+1}(\bb K)$ on $\bb P^n_{\bb K}$.

Let $\Gamma$ be a lattice subgroup of $G$. The abelian group $C(\bb P^n_{\bb K},\bb Z)$ of continuous integer-valued functions on $\bb P^n_{\bb K}$ has the structure of a $\Gamma$-module and the module of coinvariants $C(\bb P^n_{\bb K},\bb Z)_{\Gamma}= H_0(\Gamma; C(\bb P^n_{\bb K},\bb Z))$ is a finitely generated group. Now suppose that $\Gamma$ is an $\widetilde A_n$ group \cite{ca1,cs}, i.e. $\Gamma$ acts freely and transitively on the vertex set of the Bruhat-Tits building of $G$, which has type $\widetilde A_n$.
A free group is an $\widetilde A_1$ group since it acts freely and transitively
on the vertex set of a tree, which is a building of type $\widetilde A_1$. For $n\ge 2$,  the $\widetilde A_n$ groups are unlike free groups. This article proves the following.

 \begin{theorem}\label{finite}
 If $\Gamma$ is an $\widetilde A_n$ subgroup of $\PGL_{n+1}(\bb K)$, where $n\ge 2$, then $C(\bb P^n_{\bb K},\bb Z)_{\Gamma}$ is a finite group.
 \end{theorem}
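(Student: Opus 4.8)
The plan is to combine a combinatorial model of $C(\bb P^n_{\bb K},\bb Z)$ coming from the Bruhat--Tits building with a short eigenvalue computation. Let $\Delta$ be the Bruhat--Tits building of $G=\PGL_{n+1}(\bb K)$, on which $\Gamma$ acts freely and transitively on vertices, hence freely and cocompactly on $\Delta$. Recall that $\bb P^n_{\bb K}$ is the set of line vertices of the spherical building at infinity, and that if $x$ is a vertex of $\Delta$ represented by a lattice $L$ and $\ell=\bb K v$ is a line with $v$ a primitive vector spanning $L\cap\ell$, then the lattices $L_i=\mathcal O v+\pi^iL$ ($i\ge0$) represent the vertices of the unique minimal sector-face $x=x_0,x_1,x_2,\dots$ of $\Delta$ issuing from $x$ towards $\ell$; here $[L:L_i]=q^{ni}$, so consecutive vertices drop one step in type. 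For a neighbour $x'$ of $x$ of type one less than that of $x$, put $\Omega_{x\to x'}=\{\ell\in\bb P^n_{\bb K}:x_1=x'\}$, a clopen set. I would first establish: (i) for fixed $x$ the sets $\Omega_{x\to x'}$ partition $\bb P^n_{\bb K}$ into $N:=|\bb P^n_k|$ pieces; (ii) the subdivision identity $\Omega_{x\to x'}=\bigsqcup_{x''}\Omega_{x'\to x''}$, where $x''$ runs over the $q^n$ neighbours of $x'$, of type one less than $x'$, whose associated line in the residue space $L_1/\pi L_1$ avoids the hyperplane determined by $x$ (regarded as a neighbour of $x'$ of type one more than $x'$); and (iii) by iterating (ii), the functions $\mathbf 1_{\Omega_{x\to x'}}$ generate $C(\bb P^n_{\bb K},\bb Z)$ as an abelian group. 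Together these produce a $\Gamma$-equivariant surjection $\rho\colon\bb Z[E]\twoheadrightarrow C(\bb P^n_{\bb K},\bb Z)$, where $E$ is the $G$-set of directed edges $x\to x'$ with $\operatorname{type}(x')=\operatorname{type}(x)-1$, and $\operatorname{im}(R)\subseteq\ker\rho$ for the $\Gamma$-equivariant map $R\colon\bb Z[E]\to\bb Z[E]$, $R(x\to x')=(x\to x')-\sum_{x''}(x'\to x'')$.

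Now take $\Gamma$-coinvariants. Since $\Gamma$ acts freely on $\Delta$ and transitively on vertices, every directed edge is carried uniquely onto one issuing from a fixed base vertex, so $\Gamma\backslash E$ is identified with the set of type-lowering neighbours of that vertex, a set of size $N$; thus $\bb Z[E]$ is free of rank $N$ over $\bb Z\Gamma$. By right exactness of coinvariants, $C(\bb P^n_{\bb K},\bb Z)_\Gamma$ is a quotient of $\operatorname{coker}(\bar R\colon\bb Z^N\to\bb Z^N)$, so it suffices to prove $\det\bar R\ne0$. Unwinding $R$ with the simply transitive action, $\bar R=I-M$ where $M$ is the $0$--$1$ matrix whose $p$-th column ($p\in\bb P^n_k$) is the complement of the hyperplane $\operatorname{hyp}(p)$ appearing in (ii); writing $J$ for the all-ones matrix, $M=J-H$ with $H_{b,p}=[\,b\in\operatorname{hyp}(p)\,]$. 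Since the link of a vertex of $\Delta$ is the flag complex of $k^{n+1}$, the matrix $H$ is, up to independent permutations of its rows and of its columns, the point--hyperplane incidence matrix of $\mathrm{PG}(n,q)$. Hence $\bar R=I-J+H$.

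It remains to compute. The point--hyperplane incidence matrix $A$ of $\mathrm{PG}(n,q)$ satisfies $AA^{T}=q^{\,n-1}I+\tfrac{q^{n-1}-1}{q-1}\,J$, and since permutation matrices are orthogonal the same holds for $H$. The all-ones vector $\mathbf 1$ is an eigenvector of $J$ with eigenvalue $N$ and of both $H$ and $H^{T}$ with eigenvalue $N':=\tfrac{q^{n}-1}{q-1}$ (the number of points on a hyperplane); therefore $\mathbf 1^{\perp}$ is $H$- and $H^{T}$-invariant, $J$ vanishes on it, and $(H|_{\mathbf 1^{\perp}})(H|_{\mathbf 1^{\perp}})^{T}=q^{\,n-1}I$, so every eigenvalue of $H|_{\mathbf 1^{\perp}}$ has absolute value $q^{(n-1)/2}$. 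Consequently the eigenvalues of $\bar R=I-J+H$ are $1-N+N'=1-q^{n}$ on $\mathbf 1$ and numbers of the form $1+\lambda$ with $|\lambda|=q^{(n-1)/2}$ on $\mathbf 1^{\perp}$. For $n\ge2$ we have $q^{(n-1)/2}\ge\sqrt q>1$, so every such eigenvalue is nonzero, whence $\det\bar R\ne0$, $\operatorname{coker}(\bar R)$ is finite, and therefore so is its quotient $C(\bb P^n_{\bb K},\bb Z)_\Gamma$. I expect the main obstacle to be the first paragraph — establishing that the shadows $\Omega_{x\to x'}$ generate, proving the subdivision identity (ii), and, above all, showing that the complement of the ``far'' directions from $x'$ is exactly a hyperplane of the residue projective space, i.e. getting the geometry of sector-faces and links in the affine building right and transporting it cleanly through the simply transitive $\Gamma$-action so as to pin down $\bar R=I-J+H$. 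Once that is done, the finiteness is the three-line spectral computation above, in which $n\ge2$ enters only through $q^{(n-1)/2}>1$; for $n=1$ the matrix $H|_{\mathbf 1^{\perp}}$ is merely a signed permutation and can have $-1$ as an eigenvalue, which is exactly why $C(\bb P^1_{\bb K},\bb Z)_\Gamma$ is infinite for a free group $\Gamma$.
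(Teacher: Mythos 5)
Your argument is correct, and while its combinatorial groundwork coincides with the paper's, the finiteness step is genuinely different. Your (i), (ii), (iii) are exactly the paper's partition and subdivision lemmas (equations (\ref{PA}), (\ref{PB})) together with \lemref{fg}, and your relation $[a]=\sum_{b\cap\lambda(a)=(0)}[b]$ is the paper's (\ref{A}); your type convention (edges lower type) is the mirror image of the paper's $\tau(t(e))=\tau(o(e))+1$, and the identification of $H$ with the point--hyperplane incidence matrix goes through the type-reversing involution $\lambda:\Pi_1\to\Pi_n$, which is precisely the column permutation you allow for. The divergence is in how finiteness is extracted. The paper brings in the extra chamber relation (\ref{B}), uses the presentation of $\Gamma$ by the cyclic chamber words (\lemref{presentation}) to exhibit $C(\bb P^n_{\bb K},\bb Z)_\Gamma/\langle\e\rangle$ as a quotient of the abelianization of $\Gamma$, and then invokes Kazhdan's property (T) to make that abelianization finite, with \lemref{q2} handling $\langle\e\rangle$ separately. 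You never touch the chamber relations, the presentation, or property (T): the subdivision relations alone present $C(\bb P^n_{\bb K},\bb Z)_\Gamma$ as a quotient of $\operatorname{coker}(I-J+H)$, and the symmetric-design identity $HH^{T}=q^{n-1}I+\tfrac{q^{n-1}-1}{q-1}J$ forces every eigenvalue of $I-J+H$ to be nonzero for $n\ge 2$ (your eigenvalue $1-N+N'=1-q^{n}$ on $\mathbf 1$ recovers \lemref{q2} exactly). Your route is more elementary and more general --- it uses only the free transitive vertex action, not that $\Gamma$ is a property (T) lattice --- it is effective, bounding the order of the coinvariants by $|\det(I-J+H)|=(q^{n}-1)\,|\det(I+H|_{\mathbf 1^{\perp}})|$, and it isolates cleanly why $n\ge 2$ is needed and why $n=1$ degenerates. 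What it does not provide is the paper's structural byproduct, namely the surjection from $\Gamma^{\mathrm{ab}}$ onto $C(\bb P^n_{\bb K},\bb Z)_\Gamma/\langle\e\rangle$.
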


The proof depends upon the fact that $\Gamma$ has Kazhdan's property (T). A {\it distribution} on $\bb P^n_{\bb K}$ is a finitely additive $\bb Z$-valued measure $\mu$ defined on the  clopen subsets of $\bb P^n_{\bb K}$.
\begin{corollary}\label{cor}
If $\Gamma$ is an $\widetilde A_n$ subgroup of $\PGL_{n+1}(\bb K)$, where $n\ge 2$, then there is no nonzero $\Gamma$-invariant $\bb Z$-valued distribution on $\bb P^n_{\bb K}$.
\end{corollary}
This contrasts strongly with the main result of \cite{rob} concerning boundary distributions associated with
finite graphs.
A torsion free lattice subgroup $\Gamma$ of $\PGL_2(\bb K)$ is a free group, of rank $r$ say.
It was shown in \cite{rob} that in this case the group of $\Gamma$-invariant $\bb Z$-valued distributions on $\bb P^1_{\bb K}$ is isomorphic to $\bb Z^r$. In particular, there are many such distributions.

\section{Background}
\subsection{The Bruhat-Tits building}\label{building}

If $\bb K$ is a local field, with discrete valuation $v: \bb K^\times\to\bb Z$, let
$\cl O=\{x\in \bb K:v(x)\ge0\}$ and let $\pi\in \bb K$ satisfy $v(\pi)=1$.
A \textit{lattice} $L$ is an  $\cl O$-submodule of $\bb K^{n+1}$ of rank $n+1$.
In other words $L=\cl O e_1 + \cl O e_2 + \dots + \cl O e_{n+1}$, for some basis
$\{e_1, e_2, \dots ,e_{n+1}\}$ of $\bb K^{n+1}$.
Two lattices $L_1$ and $L_2$ are \textit{equivalent} if $L_1=\alpha L_2$ for some $\alpha\in \bb K^\times$.
The Bruhat-Tits building of $\PGL_{n+1}(\bb K)$ is a two dimensional simplicial complex $\Delta$ whose vertices are equivalence classes of lattices in $\bb K^{n+1}$ \cite{ron}.
Two lattice classes $[L_0], [L_1]$ are \textit{adjacent} if, for suitable representatives
 $L_1, L_2$, we have $L_0 \subset L_1 \subset \pi^{-1} L_0$. A \textit{simplex} is a set of pairwise adjacent lattice classes. The maximal simplices (\textit{chambers}) are  the sets $\{[L_0], [L_1], \dots ,[L_n]\}$ where
$L_0 \subset L_1 \subset \dots \subset L_n \subset \pi^{-1} L_0$. These inclusions determine a canonical ordering of the vertices in a chamber, up to cyclic permutation.
Each vertex $v$ of $\Delta$ has a {\it type} $\tau(v) \in \bb Z/(n+1)\bb Z$, and each chamber of $\Delta$ has exactly one vertex of each type.
If the Haar measure on $\bb K^{n+1}$ is normalized so that $\cl O^{n+1}$ has measure $1$ then the type map may be defined by
$\tau([L])=\log_q(\vol(L)) + (n+1)\bb Z$. The cyclic ordering of the vertices of a chamber coincides with the natural ordering given by the vertex types (Figure \ref{A3chamber}).
Let $E^1$ denote the set of directed edges $e=(x,y)$ of $\Delta$ such that $\tau(y)=\tau(x)+1$.
Write $o(e)=x$ and $t(e)=y$.
The subgraph of the 1-skeleton of $\Delta$ with edge set $E^1$ is studied in \cite{csz,lsv}.
\refstepcounter{picture}
\begin{figure}[htbp]
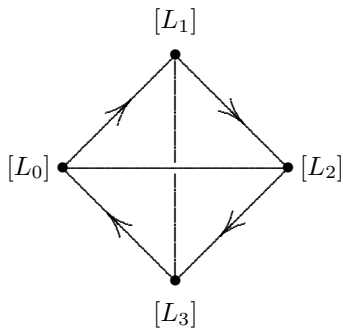
\label{A3chamber}
\centerline{
\beginpicture
\setcoordinatesystem units <1.5cm, 1.5cm>
\setplotarea x from -2 to 2, y from  -1.2 to  1.2
\put {$\bullet$} at 1 0
\put {$\bullet$} at 0 1
\put {$\bullet$} at -1 0
\put {$\bullet$} at 0 -1
\arrow <10pt> [.2, .67] from -0.5 0.5 to -0.4 0.6
\arrow <10pt> [.2, .67] from  0.5 0.5 to 0.6 0.4
\arrow <10pt> [.2, .67] from  0.5 -0.5 to 0.4 -0.6
\arrow <10pt> [.2, .67] from  -0.5 -0.5 to -0.6 -0.4
\put {$[L_0]$} at -1.3 0
\put {$[L_1]$} at 0 1.3
\put {$[L_2]$} at 1.3 0
\put {$[L_3]$} at 0 -1.3
\setlinear \plot 1 0  -1 0  0 1  1 0  0 -1  -1 0 /
\plot 0 0.05   0 1 /
\plot 0 -0.05   0 -1 /
\endpicture
}
\caption{$\widetilde A_3$ case: cyclic ordering of the vertices of a chamber}
\end{figure}
\begin{lemma}\label{parallel edges}
Let $C$ be a chamber of $\Delta$. Then $C$ contains $n+1$ directed edges $e\in E^1$.
\end{lemma}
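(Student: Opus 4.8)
The plan is to extract the directed edges of $E^1$ lying inside $C$ purely from the vertex types. Recall from the description of $\Delta$ that a chamber is a set $\{[L_0],[L_1],\dots,[L_n]\}$ of $n+1$ pairwise adjacent lattice classes, and that each chamber contains exactly one vertex of each type in $\bb Z/(n+1)\bb Z$. So after cyclically relabelling the representatives I may assume $\tau([L_i]) \equiv i \pmod{n+1}$ for $0 \le i \le n$.

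Next I would use that $C$ is a simplex, so any two distinct vertices of $C$ are adjacent; hence for every ordered pair $(x,y)$ of distinct vertices of $C$ the pair $(x,y)$ is a directed edge of $\Delta$, and by definition it lies in $E^1$ exactly when $\tau(y) \equiv \tau(x)+1 \pmod{n+1}$. Writing $x_i$ for the unique vertex of $C$ of type $i$, the directed edges of $\Delta$ supported on $C$ that belong to $E^1$ are then precisely $(x_0,x_1),(x_1,x_2),\dots,(x_{n-1},x_n)$ together with the wrap-around edge $(x_n,x_0)$, since $\tau(x_0) \equiv 0 \equiv n+1 \pmod{n+1}$. These $n+1$ pairs have pairwise distinct origins, so they are distinct directed edges, and no other directed edge supported on $C$ can lie in $E^1$; the count $n+1$ follows, in agreement with Figure \ref{A3chamber} in the case $n=3$.

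I do not expect a genuine obstacle here: the statement is immediate from the two facts quoted above, namely that the vertices of a chamber are pairwise adjacent and that a chamber meets each type exactly once. The only point that warrants a moment's care is not to overlook the cyclic edge $(x_n,x_0)$, which is exactly what makes the answer $n+1$ rather than $n$.
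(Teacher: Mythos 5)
Your proof is correct, and it takes a genuinely different route from the paper's. The paper invokes \cite[Chapter 9.2]{ron} to produce an explicit basis $(e_1,\dots,e_{n+1})$ of $\bb K^{n+1}$ and concrete lattice representatives $L_0\subset L_1\subset\dots\subset L_n\subset\pi^{-1}L_0$ for the vertices of $C$, then reads off the edges of $C$ lying in $E^1$ as $([L_k],[L_{k+1}])$ with $L_{n+1}=L_0$; that each inclusion raises the type by exactly $1$ is implicit in the explicit form of the $L_k$ (each quotient $L_{k+1}/L_k$ is one-dimensional over the residue field, so the volume is multiplied by $q$). You instead argue purely combinatorially from two facts already recorded in \S\ref{building}: the vertices of a chamber are pairwise adjacent, so every ordered pair of distinct vertices of $C$ is a directed edge, and a chamber contains exactly one vertex of each type in $\bb Z/(n+1)\bb Z$. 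From these, the ordered pairs whose types differ by $+1$ form a single $(n+1)$-cycle $(x_0,x_1),\dots,(x_{n-1},x_n),(x_n,x_0)$, and no others qualify, which gives the count. Your argument is more elementary and bypasses the appeal to a special basis; what the paper's version buys is the explicit lattice chain, which reappears in the proof of the decomposition (\ref{PC}) and keeps the notation aligned with the rest of \S\ref{building}. Both proofs identify the same $n+1$ edges, and your remark about not forgetting the wrap-around edge is exactly the paper's device of setting $L_{n+1}=L_0$.
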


\begin{proof}
By \cite[Chapter 9.2]{ron}, there is a basis $(e_1,\ldots,e_{n+1})$ of $\bb K^{n+1}$ such that the vertices of $C$ are the classes of the lattices
\begin{align*}
L_0&=\pi\cl O e_1+\pi\cl O e_2+\pi\cl O e_3+\cdots+\pi\cl O e _{n+1} \\
L_1&=\cl O e_1+\pi\cl O e_2+\pi\cl O e_3+\cdots+\pi\cl O e _{n+1} \\
L_2&=\cl O e_1+\cl O e_2+\pi\cl O e_3+\cdots+\pi\cl O e _{n+1} \\
\dots & \dots\\
L_n&=\cl O e_1+\cl O e_2+\cl O e_3+\cdots+\pi\cl O e _{n+1}.
\end{align*}
Define $L_{n+1}=L_0$. Then the edges $C$ which lie in $E^1$ are $([L_k],[L_{k+1}])$, where $0\le k\le n$.
\end{proof}

The building $\Delta$ is of type $\widetilde A_n$ and the action of $\GL_{n+1}(\bb K)$  on the set of lattices induces an action of $\PGL_{n+1}(\bb K )$ on $\Delta$ which is transitive on the vertex set. The action of $\PGL_{n+1}(\bb K )$ on $\Delta$ is \textit{type rotating} in the sense that, for each $g\in\PGL_{n+1}(\bb K )$, there exists $i \in \bb Z/(n+1)\bb Z$ such that $\tau(gv) = \tau(v)+i$ for all vertices $v \in \Delta$.

Fix a vertex~$v_0\in \Delta$ of type $0$, and let $\Pi(v_0)$ be
the set of vertices adjacent to $v_0$. Then $\Pi(v_0)$ has a natural incidence
structure: if $u,v\in\Pi(v_0)$ are distinct, then $u$ and $v$ are
{\it incident\/} if $u$, $v$ and $v_0$ lie in a common chamber of~$\Delta$.
If $v_0$ is the lattice class $[L_0]$, then $\Pi(v_0)$ consists of the classes~$[L]$ where $L_0\subset L\subset \pi^{-1} L_0$,
and one can associate to~$[L]\in\Pi(v_0)$ the subspace $v=L/L_0$ of
$\pi^{-1}L_0/L_0\cong k^{n+1}$.
Thus we may identify $\Pi(v_0)$ with the flag complex of
 subspaces of the vector space $k^{n+1}$.
Under this identification, a vertex $v\in\Pi(v_0)$ has type $\tau(v)=\dim(v)+\bb Z/(n+1)\bb Z$ where $\dim(v)$ is the
dimension of~$v$ over~$k$.
A chamber $C$ of $\Delta$ which contains $v_0$ has vertices $v_0, v_1, \dots , v_n$ where
$(0)=v_0\subset v_1\subset\dots\subset v_n\subset k^{n+1}$ is a complete flag.
For brevity, write $C=\{v_0\subset v_1\subset\dots\subset v_n\}$.

\begin{definition}
  If $e=([L_0],[L_1])\in E^1$, where $L_0\subset L_1\subset \pi^{-1} L_0$ and $\tau([L_1])=\tau([L_0])+1$, then define $\Omega(e)$ to be the set of lines $\ell\in \bb P^n_{\bb K}$ such that $L_1=L_0 + (\ell \cap \pi^{-1}L_0)$.
  The sets $\Omega(e)$, $e\in E^1$, form a basis for the topology on $P^n_{\bb K}$ (c.f. \cite[Ch.II.1.1]{ser}, \cite[1.6]{ads}).
\end{definition}

\begin{lemma}\label{PBL}
If $e\in E^1$,
then $\Omega(e)$ may be expressed as a disjoint union of $q^n$ sets
\begin{equation}\label{PB}
\Omega(e)=\bigsqcup_{\substack{o(e')=t(e)\\ \Omega(e') \subset \Omega(e)} }\Omega(e')\,.
\end{equation}
\end{lemma}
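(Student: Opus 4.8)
The plan is to reduce the decomposition~\eqref{PB} to a statement about the link of the vertex $t(e)$. Fix $e=([L_0],[L_1])\in E^1$, with representatives chosen so that $L_0\subset L_1\subset\pi^{-1}L_0$, and put $V=\pi^{-1}L_1/L_1\cong k^{n+1}$. The first step is elementary: for $\ell\in\bb P^n_{\bb K}$, write $\ell\cap L_1=\cl O b$; then $b\notin\pi L_1$ (otherwise $b=\pi c$ with $c=\pi^{-1}b\in\ell\cap L_1=\cl O b$ would force $\pi\in\cl O^\times$), and $\ell\cap\pi^{-1}L_1=\pi^{-1}(\ell\cap L_1)=\cl O\pi^{-1}b$. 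Hence $L_\ell:=L_1+(\ell\cap\pi^{-1}L_1)=L_1+\cl O\pi^{-1}b$ satisfies $L_1\subsetneq L_\ell\subseteq\pi^{-1}L_1$ with $\dim_k(L_\ell/L_1)=1$, so $([L_1],[L_\ell])\in E^1$, and by the definition of $\Omega(\cdot)$ it is the unique edge $e'\in E^1$ with $o(e')=[L_1]$ and $\ell\in\Omega(e')$. Conversely every $e'=([L_1],[L'])\in E^1$ (representatives $L_1\subset L'\subset\pi^{-1}L_1$) is of this form, since $\ell=\bb K c\in\Omega(e')$ for any $c\in L'\setminus L_1$. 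Thus $e'\mapsto L'/L_1$ identifies the edges of $E^1$ out of $[L_1]$ with the lines of $V$, the corresponding sets $\Omega(e')$ partition $\bb P^n_{\bb K}$, and $L_\ell/L_1=L'/L_1$ whenever $\ell\in\Omega(e')$.

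Since $L_1\subset\pi^{-1}L_0\subset\pi^{-1}L_1$, the subspace $H:=\pi^{-1}L_0/L_1$ of $V$ is defined, and $[\pi^{-1}L_0:L_1]=[\pi^{-1}L_0:L_0]/[L_1:L_0]=q^{n+1}/q=q^{n}$, so $H$ is a hyperplane in $V$. The crux is the claim
\[
\ell\in\Omega(e)\iff L_\ell/L_1\not\subseteq H\qquad(\ell\in\bb P^n_{\bb K}).
\]
Granting it: for an edge $e'=([L_1],[L'])$ out of $[L_1]$, if $L'/L_1\not\subseteq H$ then every $\ell\in\Omega(e')$ has $L_\ell/L_1=L'/L_1\not\subseteq H$, so $\Omega(e')\subseteq\Omega(e)$; if $L'/L_1\subseteq H$ then (as $\Omega(e')\ne\emptyset$ by the first paragraph) $\Omega(e')\cap\Omega(e)=\emptyset$. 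As distinct $\Omega(e')$ out of $[L_1]$ are disjoint, $\Omega(e)$ is the disjoint union of the $\Omega(e')$ with $L'/L_1\not\subseteq H$, which are exactly the $\Omega(e')$ with $o(e')=t(e)$ and $\Omega(e')\subseteq\Omega(e)$; their number is $|\bb P(V)|-|\bb P(H)|=\tfrac{q^{n+1}-1}{q-1}-\tfrac{q^{n}-1}{q-1}=q^{n}$, giving~\eqref{PB}.

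It remains to prove the claim, which I would do by comparing generators of $\ell\cap L_0$ and $\ell\cap L_1$. Write $\ell\cap L_0=\cl O a$; then $L_0+\cl O\pi^{-1}a$ is a rank-one extension of $L_0$ inside $\pi^{-1}L_0$ (as $a\notin\pi L_0$), just as $L_1$ is, so $\ell\in\Omega(e)\iff L_0+\cl O\pi^{-1}a=L_1\iff\pi^{-1}a\in L_1$; also $L_\ell/L_1\subseteq H\iff\pi^{-1}b\in\pi^{-1}L_0\iff b\in L_0$. From $L_0\subseteq L_1$ we have $a=\beta b$ with $\beta\in\cl O$, and from $\pi L_1\subseteq L_0$ we get $\pi b\in\ell\cap L_0=\cl O\beta b$, so $\pi\in\cl O\beta$, whence $\beta\in\cl O^\times$ or $\pi^{-1}\beta\in\cl O^\times$. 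If $\beta\in\cl O^\times$, then $b$ also generates $\ell\cap L_0$, so $b\in L_0$ and $L_\ell/L_1\subseteq H$; moreover $\pi^{-1}a=\beta\pi^{-1}b$ is not in $L_1$ (since $b\notin\pi L_1$ gives $\pi^{-1}b\notin L_1$), so $\ell\notin\Omega(e)$. If $\pi^{-1}\beta\in\cl O^\times$, then $\cl O a=\cl O\pi b$, so $b\notin L_0$ (else $b\in\cl O\pi b$, forcing $\pi\in\cl O^\times$) and $L_\ell/L_1\not\subseteq H$; moreover $\pi^{-1}a=\pi^{-1}\beta\,b$ lies in $L_1$, so $\ell\in\Omega(e)$. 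In both cases the two sides of the claim agree. The only delicate point is the dichotomy $\beta\in\cl O^\times$ or $\pi^{-1}\beta\in\cl O^\times$ --- equivalently $\ell\cap L_0=\ell\cap L_1$ or $\ell\cap L_0=\pi(\ell\cap L_1)$ --- which is precisely where the hypothesis $e\in E^1$ (i.e.\ $L_0\subset L_1\subset\pi^{-1}L_0$) enters; everything else is routine bookkeeping with $\cl O$-lattices.
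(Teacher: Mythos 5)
Your proof is correct and follows essentially the same route as the paper: both identify the edges $e'$ with $o(e')=t(e)$ and $\Omega(e')\subseteq\Omega(e)$ with the $q^n$ lines of $\pi^{-1}L_1/L_1\cong k^{n+1}$ not contained in the hyperplane $\pi^{-1}L_0/L_1$. The paper states this correspondence in one line via $L_2=L_0+(\ell\cap\pi^{-2}L_0)$, while you carry out the generator-and-valuation bookkeeping that the paper leaves implicit; no substantive difference.
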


\begin{proof}
  Let $e=([L_0],[L_1])\in E^1$, where $L_0\subset L_1\subset \pi^{-1} L_0$ and $\tau([L_1])=\tau([L_0])+1$.
  If $\ell\in \Omega(e)$ then $L_1=L_0 + (\ell \cap \pi^{-1}L_0)$.
  Choose $e'=([L_1],[L_2])$ where $L_2=L_0 + (\ell \cap \pi^{-2}L_0)$.
Now $L_0\subset L_1\subset L_2\subset \pi^{-1}L_1$ and $L_2/L_1$ is a 1-dimensional subspace of $\pi^{-1}L_1/L_1\cong k^{n+1}$.
  Moreover, $L_2/L_1$ is not incident with the $n$-dimensional subspace $\pi^{-1}L_0/L_1$ of $\pi^{-1}L_1/L_1\cong k^{n+1}$. There are precisely $q^n$ such 1-dimensional subspaces of $k^{n+1}$, each of which corresponds to an edge $e'\in E^1$.
\end{proof}

\begin{lemma} If $\xi$ is a fixed vertex of $\Delta$, then $\bb P^n_{\bb K}$ may be expressed as a disjoint union
\begin{equation}\label{PA}
\bb P^n_{\bb K}=\bigsqcup_{o(e)=\xi}\Omega(e)\,.
\end{equation}
\end{lemma}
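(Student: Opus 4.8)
The plan is to exhibit a map $\bb P^n_{\bb K}\to\{e\in E^1:o(e)=\xi\}$ whose fibres are exactly the sets $\Omega(e)$, $o(e)=\xi$; proving \eqref{PA} then reduces to checking that this map is well defined and that the $\Omega(e)$ really are its fibres.

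First I would fix a representative lattice $L_0$ of the given vertex $\xi$, noting that replacing $L_0$ by $\alpha L_0$ rescales every construction below simultaneously, so no choice is actually involved. Given a line $\ell\in\bb P^n_{\bb K}$, the intersection $M:=\ell\cap\pi^{-1}L_0$ is a lattice in the one-dimensional space $\ell$, so $M=\cl O v$ for some $v\ne 0$; I would then check $v\notin L_0$ (otherwise $\pi^{-1}v\in\pi^{-1}L_0\cap\ell=\cl O v$, which is absurd) and set $L_1:=L_0+M$. Since $\pi v\in L_0$, the module $L_1/L_0\cong\cl O v/(\cl O v\cap L_0)$ is nonzero, cyclic, and annihilated by $\pi$, hence one-dimensional over $k$; thus $L_0\subsetneq L_1\subseteq\pi^{-1}L_0$ with $\tau([L_1])=\tau(\xi)+1$, so that $e_\ell:=([L_0],[L_1])$ lies in $E^1$ with $o(e_\ell)=\xi$. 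By construction $L_1=L_0+(\ell\cap\pi^{-1}L_0)$, i.e. $\ell\in\Omega(e_\ell)$, which establishes that the sets $\Omega(e)$ with $o(e)=\xi$ cover $\bb P^n_{\bb K}$.

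For disjointness I would observe that if $\ell\in\Omega(e)\cap\Omega(e')$ with $e=([L_0],[L_1])$, $e'=([L_0],[L_1'])$ and $o(e)=o(e')=\xi$, then the defining property of $\Omega$ forces $L_1=L_0+(\ell\cap\pi^{-1}L_0)=L_1'$, whence $e=e'$. Equivalently, $\Omega(e)=\{\ell:e_\ell=e\}$ for each such edge $e$, and \eqref{PA} follows at once. I expect the only delicate point to be the lattice bookkeeping in the construction of $e_\ell$ — verifying that $\ell\cap\pi^{-1}L_0$ is free of rank one over $\cl O$ with a generator lying outside $L_0$, so that $L_0+(\ell\cap\pi^{-1}L_0)$ is genuinely a neighbour of $\xi$ of type $\tau(\xi)+1$; the remaining steps are purely formal.
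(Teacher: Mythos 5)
Your proposal is correct and follows essentially the same route as the paper: given $\ell$, form $L_1=L_0+(\ell\cap\pi^{-1}L_0)$, observe that $L_1/L_0$ is one-dimensional over $k$ so that $([L_0],[L_1])\in E^1$, and conclude $\ell\in\Omega(e)$. You simply spell out two points the paper leaves implicit — that $\ell\cap\pi^{-1}L_0$ is free of rank one with generator outside $L_0$, and that disjointness holds because $L_1$ is determined by $\ell$ and $L_0$ — both of which check out.
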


\begin{proof}
Let $\xi=[L_0]$, where $L_0$ is a lattice. If $\ell \in \bb P^n_{\bb K}$, define the lattice $L_1= L_0 +(\ell \cap \pi^{-1}L_0)$.
Then $L_0\subset L_1\subset\pi^{-1} L_0$ and
$\tau([L_1])=\tau([L_0])+1$, since $L_0$ is maximal in $L_1$.
Thus the edge $e=([L_0],[L_1])$ lies in $E^1$, and $\ell\in \Omega(e)$.
\end{proof}

\begin{lemma}
  Let $C$ be a chamber of $\Delta$ and denote the directed edges of $C\cap E^1$ by $e_0, e_1, \dots , e_n$.
  Then $\bb P^n_{\bb K}$ may be expressed as a disjoint union
\begin{equation}\label{PC}
\bb P^n_{\bb K}=\bigsqcup_{i=0}^n\Omega(e_i)\,.
\end{equation}

\begin{proof}
Let $C$ have vertex set $\{[L_0], [L_1], \dots ,[L_n]\}$ where
$L_0 \subset L_1 \subset \dots \subset L_n \subset \pi^{-1} L_0$.
Let $\ell=\bb K a \in \bb P^n_{\bb K}$, where $a\in \bb K^{n+1}$ is scaled so that
$a\in \pi^{-1}L_0-L_0$. Then $a\in L_{i+1}-L_i$ for some $i$, where $L_{i+1}/L_i\cong k$
and $L_{n+1}=\pi^{-1}L_0$.
Thus $\ell\in \Omega(e_i)$.
\end{proof}

\end{lemma}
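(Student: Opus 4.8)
The plan is to reduce the whole question to a single ambient $k$-vector space, $\pi^{-1}L_0/L_0\cong k^{n+1}$, using the lattice description of a chamber supplied by \lemref{parallel edges}. Fix representatives $L_0\subset L_1\subset\dots\subset L_n\subset\pi^{-1}L_0$ of the vertices of $C$, with each quotient $L_{i+1}/L_i\cong k$, and put $L_{n+1}=\pi^{-1}L_0$; then the directed edges of $C$ lying in $E^1$ are $e_i=([L_i],[L_{i+1}])$ for $0\le i\le n$, with the convention $e_n=([L_n],[L_0])$ (using $[\pi^{-1}L_0]=[L_0]$).

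For the covering, take $\ell\in\bb P^n_{\bb K}$. The $\cl O$-module $\ell\cap\pi^{-1}L_0$ has rank one, say $\ell\cap\pi^{-1}L_0=\cl O a$, and one checks $a\in\pi^{-1}L_0-L_0$ (otherwise $\pi^{-1}a$ would again lie in $\ell\cap\pi^{-1}L_0$). Since the filtration $L_0\subset L_1\subset\dots\subset L_{n+1}$ has one-dimensional successive quotients, $\pi^{-1}L_0-L_0=\bigsqcup_{i=0}^n(L_{i+1}-L_i)$, so there is a unique $i$ with $a\in L_{i+1}-L_i$. For that $i$, the condition $a\notin L_i$ forces $\ell\cap\pi^{-1}L_i=\cl O a$ as well (it contains $\cl O a$ and cannot be larger, else $a\in L_i$), so $L_i+(\ell\cap\pi^{-1}L_i)=L_i+\cl O a=L_{i+1}$, the last equality because $L_{i+1}/L_i$ is one-dimensional and $a\in L_{i+1}-L_i$; that is, $\ell\in\Omega(e_i)$.

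The step I expect to need the most care is disjointness: the edges $e_0,\dots,e_n$ have pairwise distinct origins, so — unlike in the decomposition \eqref{PA}, which is indexed by edges out of a single vertex — it is not automatic. The key point is that the scaled generator $a$ is determined by $\ell$ up to a unit, so that $\ell\in\Omega(e_i)$ can hold for only one value of $i$. Concretely, if $\ell\in\Omega(e_i)$ write $\ell\cap\pi^{-1}L_i=\cl O c$; then $c\in L_{i+1}\subseteq\pi^{-1}L_0$ gives $c\in\ell\cap\pi^{-1}L_0=\cl O a$, while $\cl O a\subseteq\cl O c$ gives the reverse inclusion, so $c\in\cl O^\times a$ and hence $a\in L_{i+1}-L_i$. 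Thus $\ell\in\Omega(e_i)$ is equivalent to the image $\bar a$ of $a$ in $\pi^{-1}L_0/L_0\cong k^{n+1}$ lying in $(L_{i+1}/L_0)-(L_i/L_0)$; since $(0)=L_0/L_0\subset L_1/L_0\subset\dots\subset\pi^{-1}L_0/L_0$ is a complete flag in $k^{n+1}$, every nonzero vector lies in exactly one such difference set. This re-proves the covering as well, and makes transparent why just $n+1$ of the sets $\Omega(e)$ — one around each vertex of $C$ — already tile $\bb P^n_{\bb K}$.
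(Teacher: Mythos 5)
Your proof is correct and follows essentially the same route as the paper's: scale a generator $a$ of $\ell\cap\pi^{-1}L_0$ into $\pi^{-1}L_0-L_0$ and locate it in the filtration $L_0\subset L_1\subset\dots\subset L_{n+1}=\pi^{-1}L_0$ with one-dimensional successive quotients. The only difference is that you spell out the converse direction and hence disjointness explicitly, which the paper leaves implicit; that is a worthwhile addition but not a change of method.
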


\subsection{$\widetilde A_n$ groups}\label{Antildegroups}

From now on let $\Pi=\Pi(v_0)$, the set of neighbours of the fixed vertex~$v_0\in \Delta$. Thus $\Pi$ is isomorphic to the flag complex of subspaces of $k^{n+1}$ and a chamber $C$ of $\Delta$ which contains $v_0$ is a complete flag $\{v_0\subset v_1\subset\dots\subset v_n\}$.
For $1\le r \le n$, let $\Pi_r=\{u\in\Pi(v_0):\dim u=r\}$.

Now suppose that $\Gamma$ is an $\widetilde A_n$ group i.e. $\Gamma$ acts freely and transitively on the vertex set of $\Delta$ \cite{ca1,cs}.
Then for each $v\in \Pi(v_0)$, there is a unique element $g_v \in \Gamma$ such that $g_vv_0 = v$. If $v\in \Pi(v_0)$, then $g^{-1}_v v_0$ also lies
in $\Pi(v_0)$, and $\lambda(v)=g_v^{-1}v_0$ defines an involution
$\lambda :\Pi(v_0)\to \Pi(v_0)$ such that $g_{\lambda(v)}=g_v^{-1}$.
Let $\cl T = \{(u,v,w)\in \Pi(v_0)^3 : g_ug_vg_w = 1\}$.
 If $(u,v,w)\in\cl T$ then $w$ is uniquely determined by $(u,v)$ and there is a bijective correspondence
 between triples $(u,v,w)\in\cl T$ and directed triangles $(v_0,\lambda(u),v)$ of $\Delta$ containing $v_0$.
 By \cite[Proposition 2.2]{cmsz}, the abstract group $\Gamma$
 has a presentation with generating set $\{g_v : v \in\Pi(v_0)\}$ and relations
\begin{subequations}\label{Gamma}
\begin{eqnarray}
g_ug_{\lambda(u)} &=& 1, \quad\, u\in \Pi(v_0); \label{ga}\\
g_ug_vg_w &=& 1, \quad (u,v,w)\in \cl T. \label{gb}
\end{eqnarray}
\end{subequations}

If $u\in \Pi(v_0)$ and then $\tau(g_uv_0)=\tau(u)=\tau(u)+\tau(v_0)$.
Hence $\tau(g_ux)=\tau(u)+\tau(x)$ for each vertex $x$ of $\Delta$, since $g_u$ is type rotating.
In particular, if $u,v\in \Pi(v_0)$ then
\begin{equation}\label{one}
\tau(g_ug_vv_0)=\tau(u)+\tau(v).
\end{equation}
\noindent
It follows from (\ref{one}) that
$$\tau(\lambda(u))=-\tau(u)$$
 for each $u\in \Pi$. Also, if
 $(u,v,w)\in \cl T$, then
$$\tau(u)+\tau(v)+\tau(w) = 0.$$

Let $C=\{v_0\subset v_1\subset\dots\subset v_n\}$ be a chamber of $\Delta$ containing $v_0$.
Since the vertices $v_{i-1}$ and $v_i$ are adjacent, so are the vertices $v_0=g_{v_{i-1}}^{-1}v_{i-1}$ and
$g_{v_{i-1}}^{-1}g_{v_i}v_0=g_{v_{i-1}}^{-1}v_i$.
Also $\tau(g_{v_{i-1}}^{-1}g_{v_i}v_0)=\tau(v_i)-\tau(v_{i-1})=1$. Therefore $g_{v_{i-1}}^{-1}g_{v_i}=g_{a_i}$ where $a_i\in\Pi_1$, $v_{n+1}=v_0$ and $g_{v_0}=1$.
Thus $g_{a_1}g_{a_2}\dots g_{a_k}=g_{v_k}$ ($1\le k\le n$) and  $g_{a_1}g_{a_2}\dots g_{a_{n+1}}=1$.

The $(n+1)$-tuple $\sigma(C)=(a_1,a_2,\dots ,a_{n+1})\in \Pi_1^{n+1}$ is uniquely determined by
the chamber $C$ containing $v_0$.
Denote by $\fk S$ the set of all $(n+1)$-tuples $\sigma(C)$ associated with such chambers $C$.
If $u\in \Pi(v_0)$ with $\dim(u)=k$, then $u$ is a vertex of
a chamber $C$ containing $v_0$. Therefore
\begin{equation}\label{decomposition}
g_u=g_{a_1}g_{a_2}\dots g_{a_k}, \quad \text{where}\quad a_i\in \Pi_1, 1\le i \le k.
\end{equation}
 In particular, the set $\{g_a : a\in\Pi_1\}$ generates $\Gamma$. Since $g_{\lambda(u)}=g_u^{-1}$,
 we have
 \begin{equation}\label{inverse}
 g_{\lambda(u)}=g_{a_{i+1}}\dots g_{a_{n+1}}.
\end{equation}
 Note that the expression (\ref{decomposition}) for $g_u$ is not unique,
but depends on the choice of the chamber $C$ containing $u$ and $v_0$.
An edge in $E^1$ has the form $(x, g_ax)$ where $a\in \Pi_1$.

\begin{lemma}\label{presentation}
The $\widetilde A_n$ group $\Gamma$ has a presentation with generating set $\{g_a: a\in \Pi_1\}$ and relations
\begin{equation}\label{new relations}
g_{a_1}g_{a_2}\dots g_{a_{n+1}}=1, \qquad (a_1,a_2,\dots ,a_{n+1})\in \fk S.
\end{equation}
\end{lemma}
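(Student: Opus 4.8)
The plan is to compare the presentation in the statement with the presentation (\ref{Gamma}) by means of two mutually inverse homomorphisms. Let $\widehat\Gamma$ be the group with generators $h_a$ ($a\in\Pi_1$) and defining relations $h_{a_1}h_{a_2}\cdots h_{a_{n+1}}=1$ for $(a_1,\dots,a_{n+1})\in\fk S$. Each of these relations holds in $\Gamma$ (this was checked just before the statement) and $\{g_a:a\in\Pi_1\}$ generates $\Gamma$ by (\ref{decomposition}), so there is a surjective homomorphism $\phi\colon\widehat\Gamma\to\Gamma$ with $\phi(h_a)=g_a$. It therefore suffices to construct a homomorphism $\psi\colon\Gamma\to\widehat\Gamma$ with $\psi(g_a)=h_a$ for every $a\in\Pi_1$: then $\phi$ and $\psi$ restrict to the identity on generating sets, so they are mutually inverse and the relations (\ref{new relations}) present $\Gamma$. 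To build $\psi$ I would use the presentation (\ref{Gamma}); thus it is enough to attach to each $v\in\Pi(v_0)$ an element $G_v\in\widehat\Gamma$ so that the relations (\ref{ga}) and (\ref{gb}) hold for the $G_v$. For $v$ with $\dim v=k$, I choose a chamber $C$ of $\Delta$ containing $v_0$ and $v$, write $\sigma(C)=(a_1,\dots,a_{n+1})$, and put $G_v=h_{a_1}h_{a_2}\cdots h_{a_k}$. Since $g_{a_i}=g_{v_{i-1}}^{-1}g_{v_i}$ is determined by the two vertices $v_{i-1},v_i$, the initial segment $(a_1,\dots,a_k)$, and hence the proposed value of $G_v$, depends only on the complete flag $v_1\subset\dots\subset v_{k-1}\subset v_k=v$ of the $k$-dimensional space $v$, not on how it is extended to a flag of $k^{n+1}$.

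The crux is to show that $G_v$ is also independent of this complete flag of $v$. Any two complete flags of $v$ are linked by a chain of complete flags in which consecutive terms differ in exactly one member (connectedness of the flag complex of $v$), so it is enough to compare two flags with $v_i=v_i'$ for $i\neq j$ and $v_j\neq v_j'$, where $1\le j\le k-1$. Extending both flags to the very same flag of $k^{n+1}$, the resulting tuples $\sigma(C),\sigma(C')\in\fk S$ coincide outside positions $j$ and $j+1$, so all that is needed is the identity $h_{a_j}h_{a_{j+1}}=h_{a_j'}h_{a_{j+1}'}$ in $\widehat\Gamma$; and this drops out at once on cancelling the common initial and terminal segments in the two defining relations $h_{a_1}\cdots h_{a_{n+1}}=1$ and $h_{a_1'}\cdots h_{a_{n+1}'}=1$. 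Consequently $G_v$ is well defined, and $G_a=h_a$ for $a\in\Pi_1$. I expect this well-definedness to be the only real difficulty; the remaining verifications are bookkeeping organised around the cyclic symmetry of $\fk S$.

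Indeed, the key structural fact is that $\fk S$ is invariant under cyclic permutation: if $(a_1,\dots,a_{n+1})=\sigma(C)$ then $(a_2,\dots,a_{n+1},a_1)=\sigma(g_{a_1}^{-1}C)$, which is a short verification with the vertices $g_{a_1}^{-1}v_i$ (note $a_1=v_1$, so $g_{a_1}^{-1}C$ contains $v_0$). Iterating, the cyclic shift of $\sigma(C)$ by $k=\dim u$ displays $(a_{k+1},\dots,a_{n+1})$ as the initial segment of a tuple in $\fk S$ whose chamber has vertex of dimension $n+1-k$ equal to $g_{a_{k+1}}\cdots g_{a_{n+1}}v_0=g_u^{-1}v_0=\lambda(u)$; by well-definedness this gives $G_{\lambda(u)}=h_{a_{k+1}}\cdots h_{a_{n+1}}$, whence $G_uG_{\lambda(u)}=h_{a_1}\cdots h_{a_{n+1}}=1$, establishing (\ref{ga}). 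For (\ref{gb}), given $(u,v,w)\in\cl T$ I would pick a chamber $D$ containing $v_0$ and the triangle $(v_0,\lambda(u),v)$; then $\lambda(u)$ and $v$ are vertices of $D$, so both $g_{\lambda(u)}$ and $g_v$ are initial products of $\sigma(D)$, and combining this with (\ref{ga}) and the cyclic invariance of $\fk S$ — used to recognise $G_w$, or $G_{\lambda(w)}$, as a sub-product of $\sigma(D)$ — yields $G_uG_vG_w=1$. This produces $\psi$ and completes the proof.
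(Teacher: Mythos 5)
Your argument is correct and shares the paper's overall strategy: both reduce the lemma to showing that the relations (\ref{Gamma}) are consequences of (\ref{new relations}). The difference is in how that reduction is organised. The paper fixes, for each triple $(u,v,w)\in\cl T$, a chamber containing $\{v_0,g_uv_0,g_ug_vv_0\}$, reads off $g_u,g_v,g_w$ as consecutive subwords of $g_{a_1}\cdots g_{a_{n+1}}$, and checks in the two cases $j<n+1-i$ and $j>n+1-i$ that the product is a power of the relator, handling (\ref{ga}) via (\ref{inverse}). That computation presupposes that the word in the generators assigned to $g_u$ is independent, modulo (\ref{new relations}), of the chamber used to produce it --- the paper only remarks that the expression (\ref{decomposition}) is non-unique and leaves this point implicit. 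Your proof makes it the centrepiece: reducing to two complete flags of $v$ differing in a single term (gallery-connectedness of the flag complex of $v$) and cancelling the common initial and terminal segments of the two relators is exactly the verification needed for the substitution $g_u\mapsto h_{a_1}\cdots h_{a_k}$ to define a homomorphism on the presented group, and once $G_v$ is well defined your cyclic invariance of $\fk S$ yields (\ref{ga}) and (\ref{gb}) uniformly. So your route is longer but self-contained and rigorous where the paper is telegraphic. The one place you are compressed is the final check of (\ref{gb}): it does go through, but you should, like the paper, separate the two possible orderings of $\lambda(u)$ and $v$ in the flag of $D$ (equivalently $i+j<n+1$ versus $i+j>n+1$) in order to identify the relevant subword of $\sigma(D)$ with $G_w$ in the first case and with $G_{\lambda(w)}^{-1}$ in the second.
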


\begin{proof}
It is enough to show that the relations (\ref{Gamma}) follow from the relations (\ref{new relations}).
Let $(u,v,w)\in \cl T$ with $\dim(u)=i, \dim v=j$ and  $\dim w=k$,
where $i+j+k \equiv 0 \mod (n+1)$. Choose a chamber $C=\{v_0\subset v_1\subset\dots\subset v_n\}$ containing $\{v_0, g_uv_0, g_ug_vv_0\}$.
Let $(a_1,a_2,\dots ,a_{n+1})=\sigma(C)\in \Pi_1^{n+1}$ be the element of $\fk S$ determined by $C$.
Then $g_uv_0$ is the vertex of $C$ of type $i$, so $g_u=g_{a_1}g_{a_2}\dots g_{a_i}$.

Suppose that $j<n+1-i$. Then $g_ug_vv_0$ is the vertex of $C$ of type $i+j$ and $g_ug_v=g_{a_1}g_{a_2}\dots g_{a_{i+j}}$.
Thus $g_v=g_{a_{i+1}}\dots g_{a_{i+j}}$ and $g_w=g_{a_{i+j+1}}\dots g_{a_{n+1}}$.
Therefore
$$g_ug_vg_w= g_{a_1}g_{a_2}\dots g_{a_{n+1}}.$$
Suppose that $j>n+1-i$. Then $g_ug_vv_0$ has type $i+j-n-1$ and
$$g_ug_v=g_{a_1}g_{a_2}\dots g_{a_{i+j-n-1}}=g_{a_1}g_{a_2}\dots g_{a_{n+1}}g_{a_1} \dots  g_{a_{i+j-n-1}}.$$
Thus $g_v=g_{a_{i+1}}\dots g_{a_{n+1}}g_{a_1} \dots  g_{a_{i+j-n-1}}$
 and $g_w=g_{a_{i+j-n}}\dots g_{a_{n+1}}$.
Therefore
$$g_ug_vg_w= (g_{a_1}g_{a_2}\dots g_{a_{n+1}})^2.$$
In each case the relations (\ref{gb}) follow from the relations (\ref{new relations}).
The same is true for the relations (\ref{ga}), by equation (\ref{inverse}).
\end{proof}

\section{The coinvariants}

If $\Gamma$ is an $\widetilde A_n$ group acting on $\Delta$, then $\Gamma$ acts on $\bb P^n_{\bb K}$, and the abelian group $C(\bb P^n_{\bb K},\bb Z)$ has the structure of a $\Gamma$-module, with
$(g\cdot f)(\ell)=f(g^{-1}\ell),\, g\in \Gamma,\, \ell\in\bb P^n_{\bb K}$.  The module of coinvariants, $C(\bb P^n_{\bb K},\bb Z)_\Gamma$, is the quotient of
$C(\bb P^n_{\bb K},\bb Z)$ by the submodule generated by
$\{g\cdot f-f : g\in\Gamma, f\in C(\bb P^n_{\bb K},\bb Z)\}$.
If $f\in C(\bb P^n_{\bb K},\bb Z)$ then let $[f]$ denote its class in $C(\bb P^n_{\bb K},\bb Z)_\Gamma$.
Also, let $\id$ denote the constant function defined by $\id(\ell)=1$ for $\ell\in\bb P^n_{\bb K}$, and let $\e=[\id]$.

If $e\in E^1$, let $\chi_e$ be the characteristic function of $\Omega(e)$.
For each $g\in\Gamma$, the functions $\chi_e$ and $g\cdot\chi_e=\chi_{ge}$ project to the same element in
$C(\bb P^n_{\bb K},\bb Z)_\Gamma$.
Any edge $e\in E^1$ is in the $\Gamma$-orbit of some edge $(v_0,g_av_0)$, where $a\in \Pi_1$ is uniquely determined by $e$.
Therefore it makes sense to denote by $[a]$ the class of $\chi_e$ in $C(\bb P^n_{\bb K},\bb Z)_\Gamma$.

\begin{lemma}\label{fg}
The group $C(\bb P^n_{\bb K},\bb Z)_\Gamma$ is finitely generated, with generating set $\{[a]:a\in\Pi_1\}$.
\end{lemma}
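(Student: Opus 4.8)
The plan is to show first that the characteristic functions $\chi_e$, $e\in E^1$, generate $C(\bb P^n_{\bb K},\bb Z)$ as an abelian group, and then to pass to the quotient $C(\bb P^n_{\bb K},\bb Z)_\Gamma$. Granting the first step, the second is immediate: the quotient map $C(\bb P^n_{\bb K},\bb Z)\to C(\bb P^n_{\bb K},\bb Z)_\Gamma$ is a surjective homomorphism of abelian groups, and, as noted just above the statement, every $e\in E^1$ lies in the $\Gamma$-orbit of a unique edge $(v_0,g_av_0)$ with $a\in\Pi_1$, so $\chi_e$ and $\chi_{(v_0,g_av_0)}$ have the same image $[a]$. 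Hence $C(\bb P^n_{\bb K},\bb Z)_\Gamma$ is generated by $\{[a]:a\in\Pi_1\}$, and this set is finite because $\Pi_1$ is the set of lines in $k^{n+1}$, of cardinality $1+q+\cdots+q^n$.

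For the first step, recall that $\bb P^n_{\bb K}$ is compact and totally disconnected, so each $f\in C(\bb P^n_{\bb K},\bb Z)$ is locally constant with finite image, and is therefore a finite $\bb Z$-linear combination of characteristic functions of clopen sets (its fibres). It thus suffices to prove that the characteristic function of an arbitrary clopen set $U$ is a $\bb Z$-combination of the $\chi_e$. I would exploit the refinement structure furnished by $(\ref{PA})$ and $(\ref{PB})$. Fix $v_0=[L_0]$; by $(\ref{PA})$ the sets $\Omega(e)$ with $o(e)=v_0$ form a finite partition $\cl P_0$ of $\bb P^n_{\bb K}$ into basic sets, and by $(\ref{PB})$ each part subdivides into $q^n$ basic sets, so iterating yields finer and finer finite partitions $\cl P_0\prec\cl P_1\prec\cl P_2\prec\cdots$ into basic sets. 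Given $\ell\in\bb P^n_{\bb K}$, the block of $\cl P_m$ containing $\ell$ is $\Omega(e_m)$, where $e_m=([L_m],[L_{m+1}])$ and $L_m=L_0+(\ell\cap\pi^{-m}L_0)$, extending the construction in the proof of \lemref{PBL}. Choosing coordinates so that $L_0=\cl O^{n+1}$, a short computation shows that $L_m$ determines $\ell$ modulo $\pi^m$, so that $\bigcap_m\Omega(e_m)=\{\ell\}$; since these are a decreasing chain of clopen sets, $\{\Omega(e_m)\}_{m\ge0}$ is a neighbourhood basis at $\ell$. In particular the partitions $\cl P_m$ separate the points of $\bb P^n_{\bb K}$.

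Now let $U$ be clopen. For each $\ell\in U$ the open set $U$ contains some block $\Omega(e_{m(\ell)})\ni\ell$; these blocks cover $U$, so by compactness finitely many of them do, and, taking $M$ larger than all the finitely many $m(\ell)$ that occur, each is a union of blocks of $\cl P_M$. Since $\cl P_M$ is a partition, $U$ is a finite disjoint union of blocks of $\cl P_M$, each of which is some $\Omega(e)$; hence $\chi_U=\sum\chi_e$ over these blocks. This completes the first step and with it the proof.

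The main obstacle is the middle paragraph: verifying that the nested blocks $\Omega(e_m)$ shrink to the single point $\ell$, equivalently that the iterated partitions $\cl P_m$ separate points. (An alternative route to the same conclusion is to prove that $\{\Omega(e):e\in E^1\}$ is a laminar family — any two members nested or disjoint — after which a clopen set is the disjoint union of the maximal basic sets it contains.) Once this separation property is established, the compactness argument producing a finite disjoint decomposition of an arbitrary clopen set, and the passage to the coinvariants, are both routine.
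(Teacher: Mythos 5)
Your proposal is correct and follows essentially the same route as the paper: write each $f\in C(\bb P^n_{\bb K},\bb Z)$ (locally constant with finitely many values, by compactness) as a finite $\bb Z$-combination of the $\chi_e$, then pass to coinvariants using the fact that every $e\in E^1$ is $\Gamma$-equivalent to a unique $(v_0,g_av_0)$ with $a\in\Pi_1$. The only difference is that you verify in detail that the nested sets $\Omega(e_m)$ form a neighbourhood basis, whereas the paper simply invokes the already-stated fact that $\{\Omega(e):e\in E^1\}$ is a basis for the topology of $\bb P^n_{\bb K}$.
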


\begin{proof}
Every clopen set $V$ in $\bb P^n_{\bb K}$ may be expressed as a finite disjoint union of sets of the form $\Omega(e)$, $e\in E^1$. Any function $f\in C(\bb P^n_{\bb K},\bb Z)$ is bounded, by compactness of $\bb P^n_{\bb K}$, and so takes finitely many values $n_i\in\bb Z$. Therefore $f$ may be expressed as a finite sum $f=\sum_j n_j\chi_{e_j}$, with
$e_j\in E^1$. The result follows, since $\{[\chi_e] : e\in E^1\}=\{[a] : a\in \Pi_1\}$.
\end{proof}

   Suppose that $e,e'\in E^1$ with $o(e')=t(e)=x$, so that $o(e)=g_{\lambda(a)}x$ and $t(e')=g_bx$ for (unique) $a,b\in \Pi_1$. Then, by the proof of Lemma \ref{PBL},
   $\Omega(e') \subset \Omega(e)$ if and only if $b\cap\lambda(a)=(0)$.

 Equations (\ref{PB}) and (\ref{PA}) imply the following relations in $C(\bb P^n_{\bb K},\bb Z)_\Gamma$.

\begin{subequations}\label{coinv C}
\begin{eqnarray}
& \e =  & \sum_{a\in \Pi_1}[a];\label{C} \\
& [a] = & \sum_{\substack{b\in \Pi_1 \\ b\cap \lambda(a)=(0)}}[b], \quad\, a\in \Pi_1. \label{A}
\end{eqnarray}
\end{subequations}

It is easy to see that $|\Pi_1|=\frac{q^{n+1}-1}{q-1}$.
If $a\in \Pi_1$, then $\lambda(a)\in \Pi_n$ and so the number of elements
$b\in \Pi_1$ which are incident with $\lambda(a)$ is $\frac{q^n-1}{q-1}$.
Thus there exist $q^n$ elements $b\in \Pi_1$ such that $b\cap \lambda(a)=(0)$.
In other words, the right side of (\ref{A}) contains $q^n$ terms.
As a first step towards proving that $C(\bb P^n_{\bb K},\bb Z)_\Gamma$ is finite, we show that
the element $\e=[\id]$ has finite order.

\begin{lemma}\label{q2}
In the group $C(\bb P^n_{\bb K},\bb Z)_\Gamma$,  $(q^n-1)\e = 0$.
\end{lemma}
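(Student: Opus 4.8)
The plan is to combine the two families of relations \eqref{C} and \eqref{A} to derive a scalar relation on $\e$. First I would sum the relation \eqref{A} over all $a \in \Pi_1$. The left-hand side gives $\sum_{a\in\Pi_1}[a] = \e$ by \eqref{C}. The right-hand side gives $\sum_{a\in\Pi_1}\sum_{\substack{b\in\Pi_1\\ b\cap\lambda(a)=(0)}}[b]$, which I want to rewrite by interchanging the order of summation: it equals $\sum_{b\in\Pi_1} N(b)\,[b]$, where $N(b)$ is the number of $a\in\Pi_1$ with $b\cap\lambda(a)=(0)$.

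The key computation is that $N(b)$ is independent of $b$. Since $\lambda$ is a bijection from $\Pi_1$ to $\Pi_n$ with $\tau(\lambda(a)) = -\tau(a)$, counting $a\in\Pi_1$ with $b\cap\lambda(a)=(0)$ is the same as counting hyperplanes $H = \lambda(a) \in \Pi_n$ with $b \not\subset H$ — equivalently, $|\Pi_n|$ minus the number of hyperplanes containing the fixed line $b$. The number of hyperplanes of $k^{n+1}$ through a fixed line is $\frac{q^n-1}{q-1}$ (hyperplanes through a line correspond to hyperplanes of the quotient $k^{n+1}/b \cong k^n$), and $|\Pi_n| = |\Pi_1| = \frac{q^{n+1}-1}{q-1}$, so $N(b) = \frac{q^{n+1}-1}{q-1} - \frac{q^n-1}{q-1} = q^n$. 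Hence $\sum_{b\in\Pi_1} N(b)\,[b] = q^n\sum_{b\in\Pi_1}[b] = q^n\e$ by \eqref{C} again. Comparing the two evaluations of the sum yields $\e = q^n\e$, i.e. $(q^n-1)\e = 0$.

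The main (and really only) obstacle is the double-counting argument: one must be careful that summing \eqref{A} over $a$ and then interchanging summation order is legitimate in the abelian group $C(\bb P^n_{\bb K},\bb Z)_\Gamma$ — this is fine since all sums are finite — and that the count $N(b)$ genuinely does not depend on $b$, which follows from the transitivity of $\GL_{n+1}(k)$ on lines of $k^{n+1}$ (or directly from the quotient-space description above). Everything else is bookkeeping with the two relations \eqref{C} and \eqref{A}, which are already established.

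I would present this compactly: state that summing \eqref{A} over $a\in\Pi_1$ gives, using \eqref{C} on the left, $\e = \sum_{b\in\Pi_1}\#\{a\in\Pi_1 : b\cap\lambda(a) = (0)\}\,[b]$; observe that since $\lambda$ is a bijection onto $\Pi_n$, this count equals the number of hyperplanes of $k^{n+1}$ not containing $b$, namely $q^n$, independently of $b$; hence $\e = q^n\sum_{b\in\Pi_1}[b] = q^n\e$ by \eqref{C}, so $(q^n-1)\e = 0$.
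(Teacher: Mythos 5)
Your proof is correct and is essentially the paper's own argument: substitute \eqref{A} into \eqref{C}, interchange the order of summation, and use the count $q^n$ to get $\e = q^n\e$. Your explicit justification that the dual count $N(b)=\#\{a : b\cap\lambda(a)=(0)\}$ equals $q^n$ independently of $b$ is a detail the paper passes over silently, but the route is the same.
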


\begin{proof}
By (\ref{C}) and (\ref{A}),
  \begin{equation*}
  \e=\sum_{a\in \Pi_1}[a]=\sum_{a\in \Pi_1}\left(\sum_{\substack{b\in \Pi_1 \\ b\cap \lambda(a)=(0)}} [b]\right)
   = \sum_{b\in \Pi_1}q^n[b] = q^n\e.
\end{equation*}

\end{proof}

We can now prove Theorem \ref{finite}.
It follows from (\ref{PC}) that if $(a_1,a_2,\dots ,a_{n+1})\in \fk S$ then
 \begin{equation}\label{B}
     \sum_{i=1}^{n+1} [a_i]=\e.
 \end{equation}
Therefore, by Lemmas \ref{presentation} and \ref{fg}, there is a homomorphism
$\theta$ from $\Gamma$ onto the abelian group $C(\bb P^n_{\bb K},\bb Z)_\Gamma/\langle\e\rangle$ defined by
  $\theta(g_a)=[a]+\langle\e\rangle$, for $a\in \Pi_1$.

  The $\widetilde A_n$ group $\Gamma$ has Kazhdan's property (T) \cite[Theorems 1.6.1 and 1.7.1]{bhv}. It follows that $C(\bb P^n_{\bb K},\bb Z)_\Gamma/\langle\e\rangle$ is finite \cite[Corollary 1.3.5]{bhv}.
Therefore $C(\bb P^n_{\bb K},\bb Z)_\Gamma$ is also finite, since $\langle\e\rangle$ is finite, by Lemma \ref{q2}.
\qed

\subsection*{Distributions}\label{dist}

A {\it distribution} on $\bb P^n_{\bb K}$ is a finitely additive $\bb Z$-valued measure $\mu$ defined on the  clopen subsets of $\bb P^n_{\bb K}$ \cite[1.4]{ads}. By integration, a distribution may be regarded as a $\bb Z$-linear function on the group $C(\bb P^n_{\bb K}, \bb Z)$. Therefore a $\Gamma$-invariant distribution defines a homomorphism
$C(\bb P^n_{\bb K},\bb Z)_{\Gamma}\to \bb Z$. This homomorphism is necessarily trivial, since $C(\bb P^n_{\bb K},\bb Z)_{\Gamma}$ is finite. This proves Corollary \ref{cor}.

\bigskip


\begin{thebibliography}{}

\bibitem{ads} G. Alon and E. de Shalit, On the cohomology of Drinfel'd's $p$-adic symmetric domain, {\it Israel J. Math.}\  {\bf 129}\ (2002), 1--20.

\bibitem{bhv} B. Bekka, P. de la Harpe and A. Valette, {\it Kazhdan's Property (T)}, Cambridge University Press, Cambridge, 2008.

\bibitem{ca1} D. I. Cartwright, Groups acting simply transitively on
the  vertices of a building of type $\tilde A_n$,  {\it Groups of Lie type and their Geometries}, W.\ M.\ Kantor and
L.\ Di Martino, editors, 43--76.  Cambridge University Press, 1995.


\bibitem{cs} D.\ I.\ Cartwright and T.\ Steger,  A family of
$\widetilde A_n$ groups, {\it Israel J.\ Math.}\ {\bf 103}\ (1998), 125--140.

\bibitem{csz} D.\ I.\ Cartwright, P.\ Sol\'e and A.\ \.{Z}uk, Ramanujan geometries of type $\widetilde A_n$,  {\it Discrete Math.}  {\bf 269}  (2003), 35--43.

\bibitem{cmsz} D. I. Cartwright, A. M. Mantero, T. Steger and
A. Zappa, Groups acting simply transitively on the vertices of a
building of type~$\widetilde A_2$, I,\ {\it Geom.\ Ded.}  {\bf 47}
(1993), 143--166.

\bibitem{lsv} A.\ Lubotzky, B.\ Samuels and U.\ Vishne, Ramanujan complexes of type $\widetilde A_d$,  {\it Israel J. Math.} {\bf 149} (2005), 267--299.

\bibitem{rob} G. Robertson, Invariant boundary distributions associated with finite graphs, {\it J. Combin. Theory Ser. A}, {\bf 115} (2008), 1272--1278.

\bibitem{ron} M. Ronan, {\it Lectures on Buildings}, University of Chicago Press, 2009.

\bibitem{ser} J.-P. Serre, {\it Trees}, Springer-Verlag, Berlin, 1980.

\end{thebibliography}
\end{document}